\documentclass[11pt,reqno]{paper}
\usepackage[pdftex]{graphicx}
\usepackage{fancyhdr}
\usepackage[a4paper, hmargin=2.16cm, vmargin={3.2cm, 3.2cm}]{geometry}
\usepackage[utf8]{inputenc}

\usepackage{titlesec}
\titleformat{\subsection}[runin] 
{\normalfont\bfseries}{\thesubsection}{1em}{}
\titleformat{\subsubsection}[runin] 
{\normalfont\bfseries}{\thesubsubsection}{1em}{}
\titleformat{\paragraph}[runin] 
{\normalfont\bfseries}{\theparagraph}{1em}{}
\makeatletter
\def\@seccntformat#1{\@ifundefined{#1@cntformat}%
	{\csname the#1\endcsname\quad}  
	{\csname #1@cntformat\endcsname}
}
\let\oldappendix\appendix 
\renewcommand\appendix{%
	\oldappendix
	\newcommand{\section@cntformat}{\appendixname~\thesection\quad}
}
\makeatother
\usepackage[titletoc,title]{appendix}

\usepackage{enumerate}
\usepackage{amsmath}%
\usepackage{amsfonts}%
\usepackage{amsthm}
\usepackage{amssymb}%
\usepackage{graphicx}
\usepackage{hyperref}
\usepackage{theoremref}
\usepackage{bbm}
\usepackage{amssymb}
\usepackage{color}

\setcounter{secnumdepth}{16}

\def\64{}
\def\256{4}
\def\512{8}

\DeclareMathOperator{\re}{Re}

\theoremstyle{plain}
\newtheorem{theorem}{Theorem}
\newtheorem{lemma}{Lemma}[section]
\newtheorem{prop}[lemma]{Proposition}

\theoremstyle{remark}

\newtheorem{Ex}{Example}
\theoremstyle{definition}
\newtheorem{definition}{Definition}

\begin{document}
\title{Power substitution in quasianalytic Carleman classes}
\author{Lev Buhovsky\textsuperscript{1}, Avner Kiro\textsuperscript{2} and Sasha Sodin\textsuperscript{3}}
\maketitle
\footnotetext[1]{School of Mathematical Sciences, Tel Aviv
	University, Tel Aviv, 69978, Israel. Email: levbuh@tauex.tau.ac.il. Supported in part by ERC Starting Grant 757585 and ISF Grant 2026/17.}
\footnotetext[2]{School of Mathematical Sciences, Tel Aviv
University, Tel Aviv, 69978, Israel. Email: avnerkiro@gmail.com. Supported in part by ERC Advanced Grant 692616, ISF Grant 382/15 and BSF Grant 2012037.}
\footnotetext[3]{School of Mathematical Sciences, Queen Mary University of London, London E1 4NS, United Kingdom \& School of Mathematical Sciences, Tel Aviv
University, Tel Aviv, 69978, Israel. Email:
a.sodin@qmul.ac.uk. Supported in part by the European
Research Council starting grant 639305 (SPECTRUM) and by a Royal Society Wolfson Research Merit Award.}

	\begin{abstract}
		Consider an equation of the form $f(x)=g(x^k)$, where $k>1$ is an integer and $f(x)$ is a function in a given  Carleman class of smooth functions. For each $k$, we construct a non-homogeneous Carleman-type class which contains all the smooth solutions $g(x)$ to such equations. We prove that if the original Carleman class is quasianalytic, then so is the new class. The results admit an extension to multivariate functions.
	\end{abstract}
	
\section{Introduction}
In this text, we consider power substitutions in Carleman classes, i.e.\ equations of the form $g(x^k)=f(x)$, where $k>1$ is an integer and  $f$ is a given function in a  quasianalytic Carleman class $C^M$   (see Definition~\ref{def:Cclass}).  Our motivation to study power substitutions in Carleman classes  mainly comes from \cite{Bierstone}. There, it was shown, under certain conditions, that if $F(x,y)$ belongs to a quasianalytic Carleman class $ C^M(\mathbb{R}^{d_1}\times \mathbb{R}^{d_2})$ (see Definition \ref{def: multCarlemanClass}) and the equation  $F(x,y)=0$ admits a $C^\infty $ solution $y=h(x)$, then $h$ is the image of a $C^M(\mathbb{R}^{d_1})$ function under finitely many power substitutions and blow-ups. Another source of motivation comes from \cite{BierstoneMilman,RolinSpeisseggerWilkie}, where normalization algorithms for power series in Carleman classes also require finitely many power substitutions and blow-ups.

The results of \cite{Bierstone} imply that smooth solutions $g$ of $f(x)=g(x^k)$ inherit a certain quasianalytic property from the original Carleman class: they are definable in an appropriate $o$-minimal structure. The combination of our main results (Theorems \ref{thm: thm1} and \ref{thm:thm2} below) implies the following more explicit quasianalytic property: $g$  belongs to a quasianalytic class $C_{1-1/k}^M$ (Definition~\ref{def:3}) completely characterized in terms of bounds on the derivatives of $g$.  
	
\begin{definition}\label{def:Cclass}
Let $M=(M_n)_{n\geq 0}$ be a positive sequence and let $I$ be an interval. The \textit{Carleman class} $C^M(I)$ consists of all functions $f\in C^\infty(I)$ such that, for any compact set $K\subset I$, there exist constants  $A,\;B>0$ such that 
\[\left|f^{(n)}(x)\right|\leq  A B^n M_n,\quad x\in K,\; n\geq 0. \]
A   Carleman class $C^M(I)$ is said to be \textit{quasianalytic}  if any $f\in C^M(I)$ that has a zero formal Taylor expansion  at some $x\in I$ is identically zero. 
\end{definition} 

According to the Denjoy--Carleman theorem   (see \cite{carleman} or \cite[\S12]{Mandelbrojt} for this exact formulation)  the class $C^M(I)$  is quasianalytic if and only if 
\begin{equation}\label{eq:dc} \sum_{n\geq 0} \frac{M_n^C}{M_{n+1}^C}=\infty~, \end{equation}
where $M^C$ is the largest log-convex minorant of $M$, i.e. 
$$M_n^C =\min\left\{M_n, \inf_{j<n<\ell} M_j^{(\ell-n)/(\ell-j)}M_\ell^{(n-j)/(\ell-j)}\right\}.$$
A necessary and sufficient condition for the equality of two Carleman classes was given in  \cite{cartan1940solution}. In particular, if the sequence $M$ satisfies $M_n\geq n!$ for any $n\geq0$, then $C^M(I)=C^{M^C}(I)$.  

Given $f\in C^M(I)$ where $I$ is an interval such that $0\in I$ (possibly as an endpoint), we consider a function $g$ defined on the interval $I^k=\{x^k:x\in I\}$ and satisfying $g(x^k)=f(x)$. It is well known that if the class $C^M(I)$ contains all real analytic functions (i.e. there exists $\delta>0$ such that  $M_n^C\geq\delta^{n+1} n!$ for every $n\geq0$) in $I$, then  $g\in C^M\left(I^k\setminus(-\varepsilon,\varepsilon)\right)$, for any  $\varepsilon>0$ (see  Lemma \ref{lem: add smooth lem} below),   but $g$ may be singular at zero. If $g$ happens to be   $ C^\infty$ near zero, then  
\begin{equation}\label{eq:deriv0}
g^{(n)}(0)/n!=f^{(kn)}/(kn)!~,
\end{equation} as follows (for polynomials) from the Cauchy theorem, and thus there exist constants $A,\;B>0$ such that 
\begin{equation}
\left|g^{(n)}(0)\right|\leq A B^n \frac{M_{kn}}{(kn)!^{1-1/k}},\quad n\geq 0,  \label{eq:triv g bound}
\end{equation}
It was shown in \cite{Bierstone} and \cite{Nowak}  that under some regularity conditions on the sequence $M$,   the  estimate \eqref{eq:triv g bound} on the derivatives of  $g$ at zero  can be extended to the interval $I^k$. A similar fact,  without regularity assumptions, follows from a combination of Theorem~\ref{thm: thm1} and  Proposition~\ref{prop: extra smooth}. Namely, it follows that $g\in C^{M^{(k)}}(I^k)$, where $M^{(k)}_n=n!\sup_{j\leq nk+1} \frac{M_{j}}{j!} $.  Note that by the formula (\ref{eq:deriv0}) for $g^{(n)}(0)$ there is no smaller Carleman class that contains $g$.  By the Denjoy--Carleman theorem, the classes $C^{M^{(k)}}$ may fail to be quasianalytic even if the original class $C^{M}$ is quasianalytic. We will show that in the above case, the function  $g$  belongs to a new non-homogeneous class $C^M_{1-1/k}(I^k)$ of smooth functions (defined in Definition~\ref{def:3} below) and that the latter class is quasianalytic.

\medskip

 Results similar to these were first obtained by the second author as a byproduct of the work \cite{Avner}. In the first version of this paper, available on arXiv under the same address, we applied the elementary method of Bang \cite{Bang2} to relax the regularity assumptions at the expense of relinquishing the precise asymptotics. Here, instead of adapting the arguments from classical quasianalyticity, we employ a reduction to the classical setting, and in this way relax the regularity assumption even further.

\section{Results}
\begin{definition}\label{def:3}
	Let $M$ be a a positive sequence,  $I$ be an interval and let $0\leq a$. The class $C^M_a(I)$ consists of all functions $g\in C^\infty(I)$ such that for any compact set $K\subset I$, there exist constants $A,\;B>0$ such that 
	\[ \left|g^{(n)}(x)\right|\leq A B^n \frac{M_n}{|x|^{an}},\quad x\in K\setminus\{0\},\; n\geq 0.\]
\end{definition}
\begin{theorem}\label{thm: thm1}
	Let $C^M(I)$ be  Carleman class, and let $k>1$ be an integer. Let  $g\in C^\infty(I^k)$, and let $f(x)=g(x^k)$. If        $f\in C^M(I)$, then $g\in C^M_a(I^k)$, where $a=1-\tfrac{1}{k}$.
\end{theorem}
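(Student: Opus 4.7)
The plan is to start from Faà di Bruno's formula for $g(y) = f(y^{1/k})$ (valid for $y>0$), and to use the smoothness of $g$ at $0$ to cancel the singularities of individual terms. For $y$ in a compact subset of $I^k\setminus\{0\}$, the map $y\mapsto y^{1/k}$ is real-analytic with uniformly bounded derivatives, so the composition $f\circ(\cdot)^{1/k}$ lies in $C^M$ on that subset --- stronger than needed --- and the real work is near $y=0$.

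Near $0$, Faà di Bruno with $h(y)=y^{1/k}$, $h^{(j)}(y)=\binom{1/k}{j}j!\,y^{1/k-j}$, yields
\[
g^{(n)}(y)\;=\;\sum_{m=1}^{n}\beta_{n,m}\,y^{m/k-n}\,f^{(m)}(y^{1/k}),
\]
where $\beta_{n,m}=B_{n,m}(c_1,c_2,\ldots)$ with $c_j=\binom{1/k}{j}j!$ and $B_{n,m}$ the partial Bell polynomials. Substituting $x=y^{1/k}$, the target estimate becomes
\[
\Bigl|\sum_{m=1}^{n}\beta_{n,m}\,x^{m-n}\,f^{(m)}(x)\Bigr|\;\le\;AB^nM_n.
\]
The summands with $m<n$ individually diverge at $x=0$, so the bound must come from cancellation; this cancellation is supplied by the hypothesis $g\in C^\infty$, which --- by equating Taylor coefficients in $f(x)=g(x^k)$ at $0$ --- forces $f^{(j)}(0)=0$ for $k\nmid j$ and $f^{(k\ell)}(0)=(k\ell)!\,g^{(\ell)}(0)/\ell!$.

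To make the cancellation explicit I would fix a large parameter $N$ and split $f^{(m)}(x)=T^{(m)}_N(x)+r_{N,m}(x)$, where $T_N$ is the Taylor polynomial of $f$ at $0$ of order $N$ and $|r_{N,m}(x)|\le AB^{N+1}M_{N+1}|x|^{N-m+1}/(N-m+1)!$. Because $f^{(j)}(0)=0$ off multiples of $k$, one has the polynomial identity $T_N(x)=Q_{\lfloor N/k\rfloor}(x^k)$, where $Q_{\lfloor N/k\rfloor}$ is the Taylor polynomial of $g$ at $0$. Applying Faà di Bruno to this polynomial relation gives
\[
\sum_{m=1}^{n}\beta_{n,m}\,x^{m-n}\,T^{(m)}_N(x)\;=\;x^{n(k-1)}\,Q^{(n)}_{\lfloor N/k\rfloor}(x^k),
\]
so the polynomial part of the main combination has no negative powers of $x$; it can be bounded, via \eqref{eq:triv g bound}, by an expression in $|g^{(\ell)}(0)|$ for $\ell \in [n, \lfloor N/k \rfloor]$. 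The remainder contribution is bounded termwise by $AB^{N+1}M_{N+1}\,y^{(N+1-n)/k}$ times a combinatorial factor summing $|\beta_{n,m}|/(N-m+1)!$ over $m\le n$, controllable by Stirling-type bounds on Bell polynomials.

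Choosing $N$ optimally in $n$ and $y$ --- balancing the polynomial part against the remainder --- should produce the desired bound $AB^nM_n/y^{n(1-1/k)}$. The main obstacle I foresee is this optimization step, which in its cleanest form uses monotonicity of the ratios $M_{p+1}/M_p$. Since $M$ is not assumed log-convex in the theorem, I would first replace $M$ by its log-convex regularization $M^C$ through the identification $C^M=C^{M^C}$ (valid once $M_n\ge n!$); within the log-convex setting the balancing of $M_{N+1}\,y^{(N+1-n)/k}$ against $M_n$ reduces to a standard saddle-point computation. This seems to align with the introduction's mention of a ``reduction to the classical setting.''
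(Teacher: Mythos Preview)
Your decomposition via the Taylor polynomial of $f$ at $0$ is exactly the device the paper uses, and your identity $\sum_m \beta_{n,m}\,x^{m-n}T_N^{(m)}(x)=x^{n(k-1)}Q_{\lfloor N/k\rfloor}^{(n)}(x^k)$ is correct. The gap is in the last step: you propose to let $N$ vary with $y$ and optimize, invoking log-convexity through $C^M=C^{M^C}$. This is both unnecessary and problematic --- the identification $C^M=C^{M^C}$ requires $M_n\ge n!$, which is not assumed, and for $N\ge kn$ the polynomial part drags in $M_{k\ell}$ with $\ell>n$, which need not be comparable to $M_n$ without regularity hypotheses.

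The missing observation is that one should simply take $N=n-1$, independently of $y$. Then $Q_{\lfloor(n-1)/k\rfloor}$ has degree strictly less than $n$, so $Q^{(n)}\equiv 0$ and the polynomial contribution vanishes outright. Only the remainder survives; since $|r_{n-1,m}(x)|\le M_n|x|^{n-m}/(n-m)!$ involves only $f^{(n)}$, you obtain
\[
|g^{(n)}(y)|\,y^{(1-1/k)n}\;\le\; M_n\sum_{m=1}^n\frac{|\beta_{n,m}|}{(n-m)!},
\]
and this sum is at most $C^n$ (bound $|\beta_{n,m}|$ by the unsigned Stirling numbers $|s(n,m)|$ and estimate $[u^n]\,e^u\,u(u+1)\cdots(u+n-1)$ by Cauchy's inequality on $|u|=n$). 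No optimization, no log-convexity, and no quasianalyticity are needed.

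The paper (Lemma~\ref{lem: power-sub estimate}) carries out the same idea more directly: it writes
\[
g(x)=P(x)+\frac{1}{(n-1)!}\int_0^{x^{1/k}}f^{(n)}(t)\,(x^{1/k}-t)^{n-1}\,dt,\qquad \deg P<n,
\]
and differentiates the integral $n$ times after binomially expanding the kernel, bypassing Fa\`a di Bruno entirely and obtaining the explicit constant $2^n$. The ``reduction to the classical setting'' mentioned in the introduction refers to the proof of Theorem~\ref{thm:thm2}, not this one.
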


The next  proposition demonstrates that functions in $C^M_a(I)$  with $a<1$   carry additional, implicit, control on their successive derivatives.
\begin{prop}{\label{prop: extra smooth}}
	Let $M$ be positive sequence, and let $k>1$ be an integer. If  $g\in C^M_a(I)$ with $a=1-\frac{1}{k}$, then $g\in C^{M^{(k)}}(I)  $, where 
	$$M^{(k)}_n=n!\sup_{j\leq nk+1} \frac{M_{j}}{j!}. $$
\end{prop}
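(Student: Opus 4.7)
The plan is to split the bound into the easy range (where $|x|$ is bounded away from $0$) and the essential range (where $|x|$ is small). Fix a compact $K\subset I$, let $K'\subset I$ be a slightly larger compact containing $K$ in its interior, and let $A,B>0$ be the constants from the definition of $C^M_a(I)$ applied to $K'$. Choose $r>0$ small enough that $[-r,r]\cap I\subset K'$. For $x\in K$ with $|x|\geq r/2$, the hypothesis combined with the trivial inequality $M_n\leq M_n^{(k)}$ (taking $j=n$ in the supremum appearing in the definition of $M_n^{(k)}$, which is allowed since $n\leq kn+1$) yields $|g^{(n)}(x)|\leq A(B/(r/2)^a)^n M_n^{(k)}$, of the required form.

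For the essential range $|x|<r/2$, assume $x\geq 0$ without loss of generality. The key step is to Taylor-expand $g^{(n)}$ about the point $x_0=r$ to order $\ell:=(k-1)n+1$:
\[
g^{(n)}(x)=\sum_{j=0}^{\ell-1}\frac{g^{(n+j)}(r)}{j!}(x-r)^j+\frac{(-1)^\ell}{(\ell-1)!}\int_x^r g^{(n+\ell)}(t)(t-x)^{\ell-1}\,dt.
\]
This precise choice of $\ell$ is dictated by two competing constraints. On the one hand, since $|g^{(n+\ell)}(t)|\leq AB^{n+\ell}M_{n+\ell}/t^{a(n+\ell)}$ for $t>0$, the remainder integrand is bounded by a constant times $t^{\ell-1-a(n+\ell)}=t^{1/k-1}$, which is integrable near $0$ precisely when $\ell>(k-1)n$. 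On the other hand, to keep $n+\ell\leq kn+1$ so that the inequality $M_{n+\ell}/(n+\ell)!\leq M_n^{(k)}/n!$ remains applicable, we need $\ell\leq (k-1)n+1$; hence $\ell=(k-1)n+1$ is forced.

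For the finite sum I would use $|g^{(n+j)}(r)|\leq AB^{n+j}M_{n+j}/r^{a(n+j)}$, the bound $|x-r|\leq r$, and the arithmetic identity $M_{n+j}/j!=\binom{n+j}{j}\,n!\,M_{n+j}/(n+j)!\leq\binom{n+j}{j}M_n^{(k)}$, valid for $n+j\leq kn+1$. Combined with $\binom{n+j}{j}\leq 2^{n+j}$, each term is of the form $(\text{const})^{n+j}M_n^{(k)}$ and the geometric sum is bounded by $A'(B')^n M_n^{(k)}$. For the remainder, using $(t-x)^{\ell-1}\leq t^{\ell-1}$ for $t\geq x\geq 0$ and the integrability above, one reduces to estimating $M_{kn+1}/((k-1)n)!=\bigl[(kn+1)!/((k-1)n)!\bigr]\cdot M_{kn+1}/(kn+1)!\leq(kn+1)^{n+1}\cdot M_n^{(k)}/n!$; Stirling's formula then gives $(kn+1)^{n+1}/n!\leq C(Ck)^n n$, whose extra polynomial-in-$n$ factor is absorbed into a slightly larger exponential constant. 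The symmetric case $x\in(-r/2,0]$ is handled by expanding about $x_0=-r$. The main obstacle is the careful bookkeeping of exponential constants; once the dual role of the choice $\ell=(k-1)n+1$ is recognized, the remaining estimates are routine.
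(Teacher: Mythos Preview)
Your proof is correct and follows essentially the same approach as the paper: Taylor-expand $g^{(n)}$ about a fixed nonzero point to the precise order $(k-1)n+1$, so that the highest derivative appearing is $g^{(kn+1)}$ and the remainder integrand $t^{1/k-1}$ is integrable at $0$. The paper packages this computation as a separate lemma (Lemma~3.2, stated under the auxiliary hypothesis that $(M_j/j!)_j$ is nondecreasing, to which one first reduces by replacing $M_j$ with $j!\sup_{i\le j}M_i/i!$) and then specializes, whereas you carry out the same estimate directly and absorb the supremum into $M_n^{(k)}$ term by term.
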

In the case that $g$ is the smooth solution to a power substitution $g(x^k)=f(x)$ with $f\in C^M(I)$, this additional smoothness was already shown in \cite{Bierstone,Nowak}.  

Our next result is about quasianalyticity of $C^M_a(I)$ with $a<1$.
\begin{theorem}\label{thm:thm2} Let $M$ be positive sequence, and let $0\leq a<1$. If $M$ is log--convex or $(M_n/n!)_{n\geq0}$ is non decreasing, then the class $C^M_a(I)$ is quasianalytic if and only if \eqref{eq:dc} holds.
\end{theorem}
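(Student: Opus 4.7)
The ``only if'' direction is immediate. On any compact $K \subset I$ with $R = \max_{K}|x|$, the inclusion $C^M(I) \subseteq C^M_a(I)$ holds since $|f^{(n)}(x)| \leq A B^n M_n \leq A(BR^a)^n M_n|x|^{-an}$ on $K \setminus \{0\}$. Under either hypothesis on $M$ one has $C^M(I) = C^{M^C}(I)$: trivially if $M$ is log-convex, and by Cartan's theorem~\cite{cartan1940solution} after rescaling if $(M_n/n!)$ is non-decreasing (this forces $M_n \geq M_0\,n!$). By the classical Denjoy--Carleman theorem, $C^{M^C}(I)$ is quasianalytic if and only if \eqref{eq:dc} holds, so quasianalyticity of $C^M_a$ forces \eqref{eq:dc}.

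For the ``if'' direction, assume \eqref{eq:dc}, and let $g \in C^M_a(I)$ have vanishing Taylor series at $x_0 \in I$. On any compact $K \subset I$ with $0 \notin K$, setting $c = \min_K|x| > 0$, the bound $|g^{(n)}(x)| \leq A(Bc^{-a})^n M_n$ shows $g|_K$ lies in a classical Carleman class $C^{\tilde M}(K)$ with $\tilde M_n = c^{-an} M_n$. Since $\tilde M_n/\tilde M_{n+1} = c^a \cdot M_n/M_{n+1}$, both \eqref{eq:dc} and the regularity hypothesis on $M$ are inherited by $\tilde M$, so $C^{\tilde M}$ is classically quasianalytic. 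If $x_0 \neq 0$, applying this on a compact neighborhood of $x_0$ gives $g \equiv 0$ near $x_0$, and then on the entire connected component of $I \setminus \{0\}$ containing $x_0$ by continuation.

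The crux is the case $x_0 = 0$, which also covers propagation from one component of $I\setminus\{0\}$ to the other (by smoothness, vanishing on one side forces $g^{(n)}(0)=0$ for all $n$): one must show that $g \in C^M_a(I)$ with $g^{(n)}(0)=0$ for every $n$ vanishes in a neighborhood of $0$. Following the paper's reduction strategy, I would first reduce to $a = 1-1/k$ for an integer $k>1$ (using $C^M_a(I) \subseteq C^M_{a'}(I)$ on bounded intervals for $a \leq a'$, since $|x|^{-an} \leq |x|^{-a'n}$ for $|x|\leq 1$), and then consider $f(x) := g(x^k)$, which is smooth on $I^k$ and has vanishing Taylor at $0$. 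Fa\`a di Bruno gives
\[
f^{(n)}(x) = \sum_{m=\lceil n/k \rceil}^{n} C_{n,m}\,g^{(m)}(x^k)\,x^{km-n},
\]
and the $C^M_a$ bound yields $|g^{(m)}(x^k)\,x^{km-n}| \leq AB^m M_m\, x^{m-n}$, each summand singular as $x \to 0$. Since $f^{(n)}(0)=0$ for every $n$, these singularities must cancel. The goal is to quantify this cancellation---via Taylor expansion of the $g^{(m)}$'s around carefully chosen auxiliary points, combined with the $C^M_a$ estimates on the integral remainders---so as to show $|f^{(n)}(x)| \leq A(B')^n M_n$ on some $[0,\delta]$. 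This places $f$ in a \emph{classical} quasianalytic Carleman class built from $M$, and applying Denjoy--Carleman to $f$ forces $f \equiv 0$ near $0$, hence $g \equiv 0$ near $0$. The main obstacle is executing this cancellation: a term-by-term estimate only yields $f \in C^{M^{(k)}}$ (cf.\ Proposition~\ref{prop: extra smooth}), which need not be quasianalytic even when $M$ is, so the zero-Taylor hypothesis at $0$ must genuinely enter the estimates to keep the controlling sequence at the level of $M$.
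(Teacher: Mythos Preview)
Your overall reduction is exactly the paper's strategy, and the ``only if'' direction as well as the case $x_0\neq 0$ are fine. The gap is in the case $x_0=0$: you correctly identify the Fa\`a di Bruno expansion
\[
f^{(n)}(x)=\sum_{n/k\leq m\leq n} C_{n,m}\,g^{(m)}(x^k)\,x^{km-n},
\]
but you then look for \emph{cancellation} among the singular terms, and you believe the hypothesis $g^{(j)}(0)=0$ must enter the estimate on $f^{(n)}$. This is the wrong diagnosis, and it leaves the proof genuinely incomplete.

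The point you are missing is that each term $g^{(m)}(x^k)\,x^{km-n}$ can be bounded \emph{individually}, using only the $C^M_{1-1/k}$ hypothesis on $g$---no vanishing and no cancellation. The trick (Lemma~\ref{lem: add smooth lem}) is to Taylor-expand $g^{(m)}$ around a fixed point \emph{away} from $0$, say $x=1$, to order $n-m$ with integral remainder; feeding the bound $|g^{(n)}(t)|\leq M_n t^{-(1-1/k)n}$ into the remainder gives
\[
|g^{(m)}(y)|\;\lesssim\; 2^n\,\frac{m!}{n!}\,M_n\cdot
\begin{cases}
y^{-(m-n/k)}, & m>n/k,\\
1+\log\tfrac{1}{y}, & m=n/k,
\end{cases}
\]
for $0<y\le 1$. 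With $y=x^k$ this makes $g^{(m)}(x^k)\,x^{km-n}$ bounded by $2^n(m!/n!)M_n$ (times a harmless logarithm when $km=n$). Combined with the elementary bound $|C_{n,m}|\lesssim C^n n^{n-m}$, the Fa\`a di Bruno sum telescopes to $|f^{(n)}(x)|\leq A C^n M_n$, with an extra factor $(1+\log\tfrac1x)$ only when $k\mid n$ (this is Lemma~\ref{lem: powersub lem}). The residual logarithmic singularity is removed by passing to the antiderivative $h(x)=\int_0^x g(y^k)\,dy$: writing $h^{(n)}(x)=f^{(n-1)}(1)-\int_x^1 f^{(n)}(y)\,dy$ and using $\int_0^1(1+\log\tfrac1y)\,dy<\infty$ places $h$ in a genuine Carleman class $C^{\widehat M}$ with $\widehat M$ still satisfying \eqref{eq:dc}. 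The vanishing Taylor hypothesis is used only now, at the very end, to conclude $h\equiv 0$ from classical Denjoy--Carleman.
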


	There are multivariate analogues to Theorems 1 and 2. We postpone the discussion of such analogues to the last section. 

\medskip
Finally, the next two examples show that  when $a\geq 1$, there are no analogues statements  to Proposition \ref{prop: extra smooth} and Theorem \ref{thm:thm2}, even in the simple analytic case, when $M_n=n!$. 
\begin{Ex}\label{ex: Ex1}
	Consider the $C^\infty[0,1]$ function $g$, defined by  $g(x)=\exp(-1/x)$ for $0<x\leq 1$ (and $g(0)=0$). By Cauchy's  estimates for the derivatives of analytic functions, we have 
	$$|g^{(n)}(x)|\leq n!\frac{2^n}{x^n}\cdot\max_{|z-x|=\frac{x}{2}}|g(z)|\leq n!\frac{2^n}{x^n}. $$
	So $g\in C^{M}_1\left([0,1]\right)$ with $M_n=n!$, and $g^{(n)}(0)=0$ for any $n\geq 0$. In particular, there is no analogue to Theorem~\ref{thm:thm2} for $a\geq1$.
\end{Ex}
\begin{Ex}\label{ex: Ex2}
	Let $(N_n)_{n\geq0}$ be an arbitrary positive sequence. We argue that there exists  a function $g\in C^{n!}_1\left([0,1]\right)$ such that 
\begin{equation}
 {\limsup_{n\to\infty}}\frac{|g^{(n)}(0)|}{N_n}>0. \label{eq: func with large der}
\end{equation}
	In particular, the existence of such a function shows that the analogue to Proposition~\ref{prop: extra smooth} in the case $a\geq 1$ does not hold.
	
	The construction of $g$ is done in two steps. First, by Borel's Lemma (see  \cite[p. 44]{Borel} or \cite[p.16]{hormanderBook}) there is a $2\pi$ periodic and $C^\infty(\mathbb{R})$ function, $h$, such that  $h^{(n)}(0)=N_n$, for any $n\geq 0$.  Expanding the function $h$ in  a  Fourier series, we have 
	\[h(x)=\sum_{j\in \mathbb{Z}} a_j e^{ijx}, \]
	where $|a_j|=o(|j|^{-m})$ as $|j|\to\infty$, for any $m>0$. 
	
	Next, put
	\[ h_+(x):= \sum_{j\geq 0} a_j e^{-jx},\quad h_-(x) =\sum_{j> 0} a_{-j} e^{-jx}.\]
	Since $|a_j|=o(|j|^{-m})$ as $|j|\to\infty$, for any $m>0$, the functions $h_\pm$ belongs to  $C^\infty\left(\overline{\mathbb{C}_+}\right)\cap \text{Hol}\left(\mathbb{C}_+\right)$, where $\mathbb{C}_+:=\{z\in\mathbb{C}:\re(z)>0\}$. So, as in the previous example, by  Cauchy's  estimates for the derivatives of analytic functions, we have $h_\pm\in C^{M}_1\left([0,1]\right)$ with $M_n=n!$. Moreover, since {$h(x)=h_+(-ix)+h_-(ix)$} for any $x\in\mathbb{R}$,  either $h_+$ or $h_-$ satisfies \eqref{eq: func with large der}.
\end{Ex}

\section{Theorem \ref{thm: thm1} and Proposition \ref{prop: extra smooth}}
Theorem  \ref{thm: thm1} immediately follows from the next lemma.
\begin{lemma}\label{lem: power-sub estimate}
	Let $M$ be a positive sequence, $k>1$ be an integer, and  $f\in C^\infty[0,1]$ be a function such that  $ \max_{[0,1]} |f^{(n)}(x)|\leq M_n$ for any $n\geq 0$. Put $g(x)=f(x^{1/k})$. If $g\in C^\infty[0,1]$, then
	\[|g^{(n)}(x)|\leq \frac{2^n M_n}{x^{(1-1/k)n}},\quad n\geq 0.  \]
\end{lemma}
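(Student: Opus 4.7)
My plan is to subtract a suitable Taylor polynomial of $f$ at the origin to kill the singular behaviour, apply Faà di Bruno's formula, and then control the resulting combinatorial sum via a generating-function identity.

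First, since $g\in C^\infty[0,1]$ and $f(t)=g(t^k)$, the Taylor coefficient $f^{(j)}(0)$ vanishes whenever $k\nmid j$. Hence the degree $(n-1)$ Taylor polynomial $T_{n-1}(t)=\sum_{j=0}^{n-1}f^{(j)}(0)\,t^j/j!$ has the form $T_{n-1}(t)=Q(t^k)$ for some polynomial $Q$ of degree strictly less than $n$, so $g(x)=Q(x)+R(x^{1/k})$ with $R:=f-T_{n-1}$. Differentiating $n$ times kills $Q$, leaving $g^{(n)}(x)=\bigl[R(x^{1/k})\bigr]^{(n)}$. The Lagrange form of Taylor's remainder applied to $f^{(j)}$ then gives the key estimate $|R^{(j)}(t)|\le M_n\, t^{n-j}/(n-j)!$ for every $j\in\{0,\dots,n\}$ and every $t\in[0,1]$.

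Next I apply Faà di Bruno's formula to $R\circ\phi$ with $\phi(x)=x^{1/k}$. Since $\phi^{(i)}(x)=\alpha_i\,x^{1/k-i}$ where $\alpha_i=\binom{1/k}{i}\,i!$, the weighted homogeneity of the partial Bell polynomials $B_{n,j}$ yields the factorisation $B_{n,j}(\phi'(x),\ldots)=x^{j/k-n}\,B_{n,j}(\alpha_1,\alpha_2,\ldots)$. Combining this with the remainder estimate produces
\[
|g^{(n)}(x)|\le M_n\, x^{-(1-1/k)n}\sum_{j=1}^{n}\frac{|B_{n,j}(\alpha_1,\alpha_2,\ldots)|}{(n-j)!}.
\]

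The main obstacle is to show that the combinatorial sum on the right is bounded by $2^n$. For this I rely on the exponential formula for Bell polynomials together with the elementary identity $\sum_{i\ge 1}|\alpha_i|\,t^i/i!=1-(1-t)^{1/k}$, which follows from the alternating sign pattern of the $\alpha_i$ and the binomial series. Multiplying by $e^y$ and extracting the coefficient of $y^n$ then yields
\[
\sum_{j=1}^{n}\frac{B_{n,j}(|\alpha_1|,|\alpha_2|,\ldots)}{(n-j)!}=[t^n]\bigl(2-(1-t)^{1/k}\bigr)^n.
\]
Because the Taylor coefficients of $(1-t)^{1/k}$ in positive degree are non-positive, the series $\bigl(2-(1-t)^{1/k}\bigr)^n$ has non-negative coefficients, and Abel's theorem identifies their sum with the value at $t=1$, which is $2^n$. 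Consequently any single coefficient is at most $2^n$, which completes the estimate.
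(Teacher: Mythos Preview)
Your argument is correct. The opening move---subtracting the Taylor polynomial $T_{n-1}$, observing that it becomes a polynomial of degree $<n$ in $x$ under $t\mapsto x^{1/k}$, and hence vanishes after $n$ differentiations---is exactly what the paper does. After that the two proofs diverge.

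The paper keeps the \emph{integral} form of the remainder,
\[
g(x)=P(x)+\frac{1}{(n-1)!}\int_0^{x^{1/k}} f^{(n)}(t)\,(x^{1/k}-t)^{n-1}\,dt,
\]
expands $(x^{1/k}-t)^{n-1}$ via the binomial theorem, and differentiates $n$ times in $x$ under the integral sign (tracking the boundary term from the variable upper limit). The bound $2^n$ then drops out of $\sum_j\binom{n-1}{j}=2^{n-1}$ plus a $k^{-n}$ boundary contribution. Your route instead converts the integral remainder into the pointwise Lagrange bounds $|R^{(j)}(t)|\le M_n t^{n-j}/(n-j)!$, applies Fa\`a di Bruno, and reduces everything to the purely combinatorial inequality
\[
\sum_{j=1}^{n}\frac{B_{n,j}(|\alpha_1|,|\alpha_2|,\ldots)}{(n-j)!}
=[t^n]\bigl(2-(1-t)^{1/k}\bigr)^n\le 2^n,
\]
which you prove cleanly via the exponential formula and positivity of the coefficients of $2-(1-t)^{1/k}$. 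The paper's computation is more elementary (no Bell polynomials or generating functions), while your approach isolates the combinatorics from the analysis and would adapt readily to other inner substitutions $\phi$ whose derivatives share the homogeneity $\phi^{(i)}(x)=\alpha_i x^{\beta-i}$.
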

\begin{proof}
	Fix $n\geq 1$, First we write Taylor expansion to  $f$ around the origin with integral remainder:
	$$f(x)=\sum_{j=0}^{n-1} \frac{f^{(j)}(0)}{j!}x^j+\frac{1}{(n-1)!}\int_0^x f^{(n)}(t)(x-t)^{n-1}dt.  $$
	Since $g$ is   $C^\infty[0,1]$ function, we have $f^{(j)}(0)=0$ for $j$ which is not divisible by $k$. Therefore
	\begin{equation}
	g(x)=P(x)+\frac{1}{(n-1)!}\int_0^{x^{1/k}} f^{(n)}(t)(x^{1/k}-t)^{n-1}dt, \label{eq: g expan}
	\end{equation}
	where $P$ is a polynomial of degree at most $\tfrac{n-1}{k}$. Put 
	$$F(x,t) =\frac{1}{(n-1)!} (x^{1/k}-t)^{n-1}=\frac{1}{(n-1)!}\sum_{j=0}^{n-1}(-1)^{n-1-j}\binom{n-1}{j}x^{j/k}t^{n-1-j}. $$ 
	Differentiating $F$ $n$ times with respect to the variable  $x$  yields
	$$ \frac{\partial^n}{\partial x^n}F(x,t) =\frac{1}{(n-1)!}\sum_{j=0}^{n-1}\left[\left(\prod_{\ell=0}^{n-1}\left(\frac{j}{k}-\ell\right)\right) (-1)^{n-1-j}\binom{n-1}{j}x^{j/k-n}t^{n-1-j}\right].  $$
	In particular, for $0<t<x^{1/k}$, 
	\begin{equation}
	\left|\frac{\partial^n}{\partial x^n}F(x,t)\right|\leq \sum_{j=0}^{n-1} \binom{n-1}{j} x^{\frac{n-1}{k}-n}= 2^{n-1} x^{\frac{n-1}{k}-n}. \label{eq: F der first}
	\end{equation}
	In addition, the chain rule yields 
	\begin{equation*}
	\frac{\partial^{\ell}}{\partial x^{\ell}}F(x,x^{1/k})=\begin{cases} 
	\left(\frac{x^{\tfrac{1}{k}-1}}{k}\right)^{n-1}, & \ell=n-1\\
	0,\quad & \ell<n-1.
	\end{cases}
	\end{equation*}
	Thus, by differentiating \eqref{eq: g expan} $n$ times, we get
	\begin{align*}
	 g^{(n)}(x)&=\int_0^{x^{1/k}}  \frac{\partial^n}{\partial x^n}F(x,t) f^{(n)}(t)dt+\frac{\partial^{n-1}}{\partial x^{n-1}}F(x,x^{1/k})f^{(n)}(x^{\frac{1}{k}})\frac{x^{\tfrac{1}{k}-1}}{k}\\
	 &=\int_0^{x^{1/k}}  \frac{\partial^n}{\partial x^n}F(x,t) f^{(n)}(t)dt+\left(\frac{x^{1/k-1}}{k}\right)^n f^{(n)}(x^{\frac{1}{k}}).
	\end{align*}
	Finally,  using \eqref{eq: F der first}  and the bound {$|f^{(n)}|\le M_n$} we obtain
	$$ |g^{(n)}(x)|\leq M_n\left(2^{n-1}x^{n/k-n}+\frac{1}{k^n}x^{n/k-n}\right)\leq\frac{2^n M_n}{x^{(1-1/k)n}}. $$
\end{proof}
The next lemma yields Proposition \ref{prop: extra smooth} (by taking $n=k\ell+1$) and it is also being used in the proof of Theorem \ref{thm:thm2}.
\begin{lemma}\label{lem: add smooth lem}
	Let $M$ be a positive sequence such that $(M_n/n!)_n$ is non decreasing, $0<\sigma<1$, and  $g\in C^\infty[0,1]$ be a function such that  $  |g^{(n)}(x)|\leq \frac{M_n}{x^{(1-\sigma)n}}$ for any $n\geq 0$ and $x\in(0,1]$. Then for any $0\leq\ell\leq n$ and $0\leq x\leq 1$ we have
	\[ \left|g^{(\ell)}(x)\right|\leq 2^n \frac{\ell!}{n!} M_n \cdot \begin{cases}
	{n\cdot}\frac{(\ell-\sigma n)^{-1}}{x^{\ell-\sigma n}}, &\ell>\sigma n\\
	1+n\cdot \log\frac{1}{x},& \ell=\sigma n\\
	{n\cdot} (\sigma n-\ell)^{-1},&  \ell<\sigma n.
	\end{cases} \]
\end{lemma}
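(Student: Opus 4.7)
My plan is to apply Taylor's formula with integral remainder to $g^{(\ell)}$, expanded at the endpoint $x=1$, where the hypothesis provides uniform bounds on every derivative:
\[
g^{(\ell)}(x) \;=\; \sum_{j=0}^{n-\ell-1}\frac{g^{(\ell+j)}(1)}{j!}(x-1)^j \;+\; \frac{1}{(n-\ell-1)!}\int_1^x g^{(n)}(t)(x-t)^{n-\ell-1}\,dt,
\]
and then estimate the polynomial contribution and the integral remainder separately. The three cases in the conclusion will arise from the three regimes of a single power integral controlling the remainder.

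For the polynomial part, I would evaluate the hypothesis at $t=1$ to get $|g^{(\ell+j)}(1)|\le M_{\ell+j}$, and the monotonicity of $(M_n/n!)$ upgrades this to $M_{\ell+j}\le\tfrac{(\ell+j)!}{n!}M_n=\binom{\ell+j}{\ell}\tfrac{\ell!}{n!}\,j!\,M_n$. Combined with $|x-1|^j\le 1$ on $[0,1]$ and the hockey-stick identity $\sum_{j=0}^{n-\ell-1}\binom{\ell+j}{\ell}=\binom{n}{\ell+1}\le 2^n$, the polynomial part is bounded by $2^n\tfrac{\ell!}{n!}M_n$. For the remainder, I would use $|g^{(n)}(t)|\le M_n/t^{(1-\sigma)n}$ together with the crude estimate $(t-x)^{n-\ell-1}\le t^{n-\ell-1}$, valid for $t\ge x$, to reduce matters to
\[
|R|\le \frac{M_n}{(n-\ell-1)!}\int_x^1 t^{\sigma n-\ell-1}\,dt,
\]
whose three regimes of the exponent $\sigma n-\ell-1$ (greater than, equal to, or less than $-1$) produce exactly the three case factors: $(\sigma n-\ell)^{-1}$ for $\ell<\sigma n$, $\log(1/x)$ for $\ell=\sigma n$, and $(\ell-\sigma n)^{-1}x^{\sigma n-\ell}$ for $\ell>\sigma n$. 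Rewriting the combinatorial prefactor as $\tfrac{1}{(n-\ell-1)!}=(n-\ell)\binom{n}{\ell}\tfrac{\ell!}{n!}$ casts the remainder in the target shape $C\tfrac{\ell!}{n!}M_n\cdot(\text{case})$.

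The step I expect to be most delicate is fitting both contributions under the single $2^n$ prefactor stated in the conclusion. The polynomial contribution $2^n\tfrac{\ell!}{n!}M_n$ already fits the target whenever the case factor is at least $1$, but it exceeds the target when, e.g., $\sigma n-\ell\gg 1$ in the case $\ell<\sigma n$; and the naive remainder prefactor $(n-\ell)\binom{n}{\ell}\le n\cdot 2^{n-1}$ exceeds $2^n$ by a polynomial-in-$n$ factor. Tightening these to match the claimed constant requires sharpening, for instance by splitting the remainder integral at $t=2x$ and using $(t-x)\le x$ on $[x,2x]$ (which cancels the excess factor $n-\ell$ against an explicit $x^{n-\ell}$ coming from the integration), and by refining the polynomial bound using $|x-1|^j\le(1-x)^j$ in place of $1$ together with a suitable resummation. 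With these refinements, combining the polynomial and remainder estimates yields the stated inequality.
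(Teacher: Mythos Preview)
Your approach is exactly the one the paper takes: expand $g^{(\ell)}$ in a Taylor polynomial about $x=1$ with integral remainder, bound the polynomial part via $|g^{(\ell+j)}(1)|\le M_{\ell+j}\le \tfrac{(\ell+j)!}{n!}M_n$ and a binomial sum, and bound the remainder via $(t-x)^{n-\ell-1}\le t^{n-\ell-1}$ to reduce to $\int_x^1 t^{\sigma n-\ell-1}\,dt$, whose three regimes produce the three case factors. The paper uses $\binom{\ell+j}{\ell}\le\binom{n}{j}$ where you use the hockey-stick identity, but this is cosmetic.

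Your worry about fitting everything under the single prefactor $2^n$ is legitimate, and in fact the paper's own argument has the very issues you flag. First, the paper's displayed chain passes from $\tfrac{M_n}{(n-\ell-1)!}\int_x^1 t^{\sigma n-\ell-1}\,dt$ directly to $\ell!\tfrac{M_n}{n!}\int_x^1 t^{\sigma n-\ell-1}\,dt$, which would require $n!\le \ell!(n-\ell-1)!$; this is false in general, and the honest prefactor is your $(n-\ell)\binom{n}{\ell}\tfrac{\ell!}{n!}M_n$. Second, the paper actually ends with the bound $2^n\tfrac{\ell!}{n!}M_n\bigl(1+\int_x^1 t^{\sigma n-\ell-1}\,dt\bigr)$ and then silently drops the ``$1+$'' when recording the case factors, so in the regime $\ell<\sigma n$ with $\sigma n-\ell>1$ the stated bound is not literally what the computation gives.

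None of this matters for how the lemma is used (Proposition~\ref{prop: extra smooth} and Lemma~\ref{lem: powersub lem}): all that is needed there is a bound of the form $C^n\tfrac{\ell!}{n!}M_n\bigl(1+\text{case factor}\bigr)$ for some absolute $C$, and in those applications the relevant case factor is at least of order $1$ anyway. Your argument, without any of the proposed refinements, already delivers this with, say, $C=4$. So rather than labouring to hit $2^n$ exactly, it is cleaner to state the lemma with a slightly larger geometric constant and move on; the splitting-at-$2x$ and resummation tricks you sketch are not needed.
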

\begin{proof}
	The case $\ell=n$ is trivial. Assume that $\ell<n$.
	Writing Taylor expansion of degree $n-\ell-1$ to the function $g^{(\ell)}$ around 1 with integral remainder, we get
	$$  g^{(\ell)}(x)=\sum_{j=0}^{n-\ell-1}\frac{g^{(\ell+j)}(1)}{j!}(x-1)^j+\frac{1}{(n-\ell-1)!}\int_1^x g^{(n)}(t)(t-x)^{n-\ell-1} dt. $$
	For $x\in(0,1]$, 
	\begin{align*}
	\left|\int_1^x g^{(n)}(t)(t-x)^{n-\ell-1}  dt\right|\leq \int_x^1\left|g^{(n)}(t)\right| (t-x)^{n-\ell-1} dt&\leq M_{n}\int_x^1 (t-x)^{n-\ell-1} t^{-(1-\sigma)n}dt\\&\leq M_{n}\int_x^1  t^{\sigma n-\ell-1}dt.
	\end{align*}
	Hence
	\begin{align*}
	\left|g^{(\ell)}(x)\right|&\leq \sum_{j=0}^{n-\ell-1}\frac{M_{\ell+j}}{j!}+\frac{M_{n}}{(n-\ell-1)!}\int_x^1  t^{\sigma n-\ell-1}dt \\ &\leq \sum_{j=0}^{n-\ell-1}\frac{(\ell+j)!}{j!}\frac{M_{n}}{n!}+n 2^n \ell!\frac{M_{n}}{n!}\int_x^1  t^{\sigma n-\ell-1}dt \\
	&\leq\ell!\frac{M_n}{n!}\left(\sum_{j=0}^{n-\ell-1} \binom{n}{j}+n2^n\int_x^1  t^{\sigma n-\ell-1}dt\right) \\
	&\leq 2^n\ell!\frac{M_n}{n!} \left(1+n\int_x^1 t^{\sigma n-\ell-1}dt\right).
	\end{align*}
	Since
	\[\int_x^1 t^{\sigma n-\ell-1}dt \leq 
	\begin{cases}
(\ell-\sigma n)^{-1}x^{\sigma n-\ell}, &\ell>\sigma n\\
	\log\frac{1}{x},&\ell=\sigma n\\
	(\sigma n-\ell)^{-1},&\ell<\sigma n,
	\end{cases}
	\]	
	we obtained the desired bound.
\end{proof}

\section{Theorem~\ref{thm:thm2}}
\begin{lemma}\label{lem: powersub lem}
	Let $M$ be a positive sequence such that $(M_n/n!)_n$ is non decreasing, $1<k$ be an integer, and  $g\in C^\infty(0,1]$ be a function such that  $  |g^{(n)}(x)|\leq \frac{M_n}{x^{(1-1/k)n}}$ for any $n\geq 0$ and $x\in (0,1]$. Put $f(x)=g(x^k)$. Then there exist constants $A,\; C>0$ such that 
	\[ \left|f^{(n)}(x)\right|\leq A C^n M_n  \begin{cases}
	1, &k\nmid n \text{ or } n=0;\\
	1+\log\frac{1}{x},& k\mid n.
	\end{cases} \]
\end{lemma}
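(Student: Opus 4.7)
The plan is to expand $f^{(n)}(x)$ as a finite sum of terms of the form $g^{(\ell)}(x^k)\cdot(\text{polynomial in }x)$ via the Fa\`a di Bruno formula applied to the composition $f=g\circ h$ with $h(x)=x^k$, and then to bound each derivative $g^{(\ell)}(x^k)$ by the smoothed estimates furnished by Lemma~\ref{lem: add smooth lem}. Setting $\Delta(z)=(x+z)^k-x^k=\sum_{j=1}^{k}\binom{k}{j}x^{k-j}z^j$ and using the Taylor identity $f^{(n)}(x)=n!\,[z^n]\,g(x^k+\Delta(z))$, where $[z^n]$ extracts the formal coefficient of $z^n$, yields
\[
f^{(n)}(x)\;=\;n!\sum_{\ell\ge 1}\frac{g^{(\ell)}(x^k)}{\ell!}\,[z^n]\,\Delta(z)^{\ell}.
\]
Since $\Delta$ is homogeneous of joint degree $k$ in $(x,z)$ and divisible by $z$, the bracket equals $b_{n,\ell}\,x^{k\ell-n}$ with $b_{n,\ell}:=[z^n]\bigl((1+z)^k-1\bigr)^{\ell}$ a non-negative integer, vanishing outside the range $\lceil n/k\rceil\le\ell\le n$. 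The crude estimate $b_{n,\ell}\le(2^k-1)^{\ell}\le 2^{kn}$ (sum of positive coefficients evaluated at $z=1$) is amply sufficient.

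Next I would apply Lemma~\ref{lem: add smooth lem} with $\sigma=1/k$, with the lemma's parameter set equal to the present $n$, and with argument $y=x^k\in(0,1]$. The factor $x^{k\ell-n}$ from the expansion cancels cleanly against the three cases of that lemma: when $\ell>n/k$, the bound supplies $y^{-(\ell-n/k)}=x^{-(k\ell-n)}$ which is precisely the reciprocal of $x^{k\ell-n}$, and the prefactor $(\ell-n/k)^{-1}\le k$ is uniformly controlled; when $\ell=n/k$ (only possible if $k\mid n$), one has $x^{k\ell-n}=1$ and Lemma~\ref{lem: add smooth lem} produces $1+\log(1/y)=1+k\log(1/x)$; the remaining range $\ell<n/k$ is vacuous since $b_{n,\ell}=0$ there. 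A small calculation shows that the factorials $n!/\ell!$ and $\ell!/n!$ cancel, so each admissible term is at most $2^n\,b_{n,\ell}\,M_n$ times a bounded constant (or times $1+k\log(1/x)$ in the single case $\ell=n/k$).

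Summing over the at most $n+1$ admissible values of $\ell$ and absorbing $b_{n,\ell}\le 2^{kn}$ together with the factor $2^n$ into a single constant $C=C(k)$, I obtain $|f^{(n)}(x)|\le AC^nM_n$ when $k\nmid n$ and $|f^{(n)}(x)|\le AC^nM_n(1+\log(1/x))$ when $k\mid n$, as required. The only mildly delicate step is recognising the exact algebraic cancellation between $x^{k\ell-n}$ and the singular factor from Lemma~\ref{lem: add smooth lem} in the regime $\ell>n/k$, and noting that the borderline case $\ell=n/k$ is the sole source of the logarithmic correction. The remaining combinatorics (the identification of $b_{n,\ell}$ and its bound) is routine, so I do not anticipate any serious technical obstacle.
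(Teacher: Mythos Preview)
Your proof is correct and follows essentially the same route as the paper's: expand $f^{(n)}(x)$ via Fa\`a di Bruno as a sum $\sum_\ell c_{n,\ell}\,g^{(\ell)}(x^k)\,x^{k\ell-n}$, bound the combinatorial coefficients exponentially in $n$, and then apply Lemma~\ref{lem: add smooth lem} with $\sigma=1/k$ so that the factor $x^{k\ell-n}$ cancels against the singular factor from the lemma, leaving the logarithm only in the boundary case $\ell=n/k$. The only cosmetic difference is that the paper obtains the coefficient bound $|B_n(i,j)|\le C^n n^{n-i}$ by a short induction, whereas you read off $b_{n,\ell}\le(2^k-1)^\ell$ directly from the generating function $((1+z)^k-1)^\ell$ evaluated at $z=1$.
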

\begin{proof}
	First we argue by induction on $n$ that 
	\begin{equation}
	f^{(n)}(x)= \sum_{ \substack{i+j=n\\ 1\leq i\leq n,\; i(k-1)\geq j  }}B_{n}(i,j)g^{(i)}(x^k)x^{i(k-1)-j} \label{eq: g der via f}
	\end{equation}
	where \[ |B_{n}(i,j)|\leq  C^n n^{n-i}.\]
	Indeed, 
	$$ \frac{d}{dx}\left(g^{(i)}(x^k)x^{i(k-1)-j}\right)=kg^{(i+1)}(x^k)x^{(i+1)(k-1)-j}+ (i(k-1)-j)x^{i(k-1)-(j+1)}g^{(i)}(x^k)$$
	implies that 
	$$B_{n+1}(i,j)=kB_n(i-1,j)+\left(i(k-1)-j\right)B_n(i,j-1).$$
	So making use of the induction hypothesis, we find that 
	$$\left|B_{n+1}(i,j)\right|\leq k C^n  n^{n+1-i}+(ik-n) C^n n^{n-i}\leq C^{n+1}(n+1)^{n+1-i} $$
	as claimed. 
	
	Using \eqref{eq: g der via f}, we find that 
	$$\left|f^{(n)}(x)\right|\leq C^n \sum_{n/k\leq i\leq n}n^{n-i}\left|g^{(i)}(x^k)x^{ik-n}\right| $$
	By Lemma \ref{lem: add smooth lem}, 
	$$ \left|g^{(i)}(x^k)x^{ik-n}\right| \leq 2^n \frac{i!}{n!}M_n \begin{cases}
	1+n \log\frac{1}{{x^k}},& i=\frac{n}{k}\\
	{n},& i>\frac{n}{k}.
	\end{cases} $$
	Thus in the case $k\mid n$, we find that 
\begin{align*}
\left|f^{(n)}(x)\right|&\leq {n\cdot}(2C)^n M_n{\bigg[} \sum_{n/k< i\leq n}\frac{n^{n-i} i!}{n!}+\frac{n^{n(1-1/k)} \left(\tfrac{n}{k}\right)!}{n!}\left(1+\log\frac{1}{{x^k}}\right){\bigg]}\\ &\leq {nk\cdot}(2eC)^n M_n \left(1+n\log\frac{1}{x}\right), 
\end{align*}
	while in the case $k\nmid n$, we have 
	$$  \left|f^{(n)}(x)\right|\leq {n}(2C)^n M_n \sum_{n/k< i\leq n}\frac{n^{n-i} i!}{n!}\leq {nk}(2eC)^n M_n.  $$
\end{proof}

\begin{proof}[Proof of Theorem~\ref{thm:thm2}]  
	Without loss of generality we assume that  $I\subseteq [0,\infty)$ .
Assume first that  $\sum_{n\geq 0} \frac{M^C_n}{M^C_{n+1}}=\infty$.       Let $g\in C^M_a(I)$ with $g^{(n)}(x_0)\equiv 0$. We need to show that $g\equiv0$. If $x_0\neq 0$, then  $g\in C^M\left(I\setminus\{0\}\right)$, in particular by the Denjoy--Carleman Theorem $g\equiv 0$. On the other hand, if $x_0=0$, then by replacing $g(x)$ with $g(x/C)$ with sufficiently large $C>0$, there is no loss of generality with assuming that $[0,1]\subseteq I$. Let $k\in\mathbb{N}$ such that $1-\tfrac{1}{k}>a$.

If $M$ is log-convex,  denote by $\widehat{M}$ the log-convex sequence defined by
$$\widehat{M}_0=M_0,\quad \frac{\widehat{M}_{n-1}}{\widehat{M}_n}=\min \left\{\frac{M_{n-1}}{M_n}\;,\; \frac{1}{n} \right\}. $$
Note that $\widehat{M_n}\geq M_n$ for any $n\geq0$. Moreover,
$$ \frac{\widehat{M}_n}{n!}=M_0 \prod_{j=1}^n \max\left\{\frac{ M_j}{{j}M_{j-1}}\;,\; 1\right\} $$, so the sequence $({\widehat{M}_n}/{n!})_{n\geq 0}$ is non-decreasing.  By the condensation test for convergence, 
$$\sum_{n\geq 0} 2^n\frac{M_{2^n-1}}{M_{2^n}}=\infty\quad \Rightarrow \quad \sum_{n\geq 0} \min\left\{2^n\frac{M_{2^n-1}}{M_{2^n}}\;,\;1\right\}=\infty \quad \Rightarrow   \sum_{n\geq 0}\frac{\widehat{M}_{n}}{\widehat{M}_{n+1}}=\infty.$$
On the other hand, if the sequence $(M_n/{n!})_{n\geq 0}$ is non decreasing, then we put $\widehat{M}=M$.

In both cases, the function   $g$ belongs to $ C^{\widehat{M}}_{1-1/k}\left([0,1]\right)$, and $\widehat{M}$ satisfies the assumptions of Lemma~\ref{lem: powersub lem} and $\sum_{n\geq 0} \frac{\widehat{M}^C_n}{\widehat{M}^C_{n+1}}=\infty$. Consider the function $h(x)=\int_0^x g(y^k)dy$. By  Lemma \ref{lem: powersub lem}, $h\in C^{\widehat{M}}([0,1])$ and since $g^{(n)}(0)\equiv 0$, then also $h^{(n)}(0)\equiv 0$. By the Denjoy--Carleman Theorem, $h\equiv 0$ and therefore also $g\equiv 0$, as claimed.  

Next, if $\sum_{n\geq 0} \frac{M^C_n}{M^C_{n+1}}<\infty$, then by Denjoy--Carleman Theorem, the class $C^M(I)$ is not quasianalytic. Thus there exists a  non-zero function $g\in C^{M}(I)$ which is compactly supported in $I\setminus\{0\}$.  Such a function $g$ belongs to the class    $C_a^M(I)$, therefore the latter is not quasianalytic.
\end{proof}

\section{The multivariate  case}
\subsection{Carleman classes.}
Here we will use standard multiindex notation: If $\alpha=(\alpha_1,\cdots,\alpha_d)\in \mathbb{Z}_+^d$ and $x=(x_1,\cdots,x_d)\in\mathbb{R}^d$, we write $|\alpha|:=\alpha_1+\cdots+\alpha_d $,  and $\displaystyle \frac{\partial^{|\alpha|}}{\partial x^{\alpha}}:=\frac{\partial^{\alpha_1+\cdots+\alpha_d}}{\partial x_1^{\alpha_1}\cdots \partial x_d^{\alpha_d}}$.
\begin{definition}\label{def: multCarlemanClass}
	Let $M$ be a sequence of positive numbers   which is logarithmically convex, and let $\Omega$ be a connected set in $\mathbb{R}^d$. \textit{The  Carleman class} $C^M(\Omega)$ consists of all functions $f\in C^\infty(\Omega)$ such that  for any compact $K\subset\Omega$, there exist $A,\;B>0$ such that 
	\[\left|\frac{\partial^{|\alpha|}}{\partial x^\alpha}f(x)\right|\leq A B^{|\alpha|} M_{|\alpha|} ,\quad x\in \Omega \]
	and for any multiindex $\alpha=(\alpha_1,\cdots,\alpha_d)\in \mathbb{Z}_+^d$.
\end{definition}
\begin{definition}
	Let $M$ be a  sequence of positive numbers,  $\Omega$ be a connected set in $\mathbb{R}^d$, and let $a=(a_1,\cdots,a_d)\in \mathbb{R}_+^d$. The class  $C_a^M(\Omega)$ consists of all functions $g\in C^\infty(\Omega)$ such that  for any compact $K\subset\Omega$, there exist $A,\;B>0$ such that 
	\[\left|\frac{\partial^{|\alpha|}}{\partial x^\alpha}g(x)\right|\leq A B^{|\alpha|}\frac{M_{|\alpha|}}{|x_1|^{a_1\cdot\alpha_1}|x_2|^{a_2\cdot\alpha_2}\cdots|x_d|^{a_d\cdot\alpha_d}}\]
	 for any {$x\in K\backslash\{x:x_1\cdots x_d=0\}$} and  any multiindex $\alpha=(\alpha_1,\cdots,\alpha_d)\in \mathbb{Z}_+^d$. 
\end{definition}
\begin{definition}
A set $\Omega\subset\mathbb{R}^d$ is called star-shaped (with respect to the origin)	if for any $x\in \Omega$,  $\{tx:t\in[0,1]\}\subset\Omega$. 
\end{definition}
\subsection{Results.}
The next result is a muliivariate version of Theorem \ref{thm: thm1}.
\begin{theorem} \label{thm: thm3}
	Let $C^M([0,1]^d)$ be a  quasianalytic Carleman class. For  $k\in\mathbb{N}^d$,    
	denote by $y_k:\mathbb{R}^d\to\mathbb{R}^d$ the map defined by $y_k(x)=(x_1^{k_1},\cdots,x_d^{k_d})$. If $g\in C^\infty([0,1]^d)$ is such that $f = g \circ y_k\in C^M([0,1]^d)$, then $g\in C^M_a([0,1]^d)$ with $a=(\tfrac{k_1-1}{k_1},\cdots,\tfrac{k_d-1}{k_d})$.
\end{theorem}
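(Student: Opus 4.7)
My plan is to prove Theorem \ref{thm: thm3} by iterating Lemma \ref{lem: power-sub estimate} one variable at a time, which is what is meant by induction over the dimension in this setting. I introduce the interpolating functions
$$F_i(x):=g\bigl(x_1^{k_1},\ldots,x_i^{k_i},x_{i+1},\ldots,x_d\bigr),\qquad 0\le i\le d,$$
so that $F_d=f$, $F_0=g$, each $F_i$ is $C^\infty$ on $[0,1]^d$ because $g$ is, and one has the single-variable power substitution relation $F_{i-1}(y)=F_i(y_1,\ldots,y_{i-1},y_i^{1/k_i},y_{i+1},\ldots,y_d)$ in the $i$-th coordinate with the others frozen. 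The goal is then to show by downward induction on $i$ the pointwise bound
$$\bigl|\partial^\alpha F_i(x)\bigr|\le A\,B^{|\alpha|}M_{|\alpha|}\prod_{j>i}|x_j|^{-a_j\alpha_j},\qquad x\in(0,1]^d,$$
for every multi-index $\alpha$. The case $i=d$ is the hypothesis $f\in C^M([0,1]^d)$, and the case $i=0$ is the conclusion $g\in C^M_a([0,1]^d)$.

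For the inductive step $i\mapsto i-1$, fix $\alpha$, set $\alpha':=\alpha-\alpha_i e_i$, and freeze all coordinates of $y\in(0,1]^d$ except $y_i$. Since $\alpha'_i=0$, the operator $\partial^{\alpha'}$ commutes with the substitution in the $i$-th slot, so that
$$\partial^{\alpha}F_{i-1}(y)=\partial_i^{\alpha_i}\bigl[\psi\bigl(y_i^{1/k_i}\bigr)\bigr],\qquad\psi(t):=\bigl(\partial^{\alpha'}F_i\bigr)(y_1,\ldots,t,\ldots,y_d).$$
Applying the inductive hypothesis with multi-index $\alpha'+n e_i$ gives $|\psi^{(n)}(t)|\le AB^{|\alpha'|+n}M_{|\alpha'|+n}\prod_{j>i}|y_j|^{-a_j\alpha_j}$ uniformly in $t\in[0,1]$; the uniformity in $t$ is crucial and arises from the fact that at stage $i$ the bound carries no factor in the $i$-th coordinate. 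Feeding these estimates as the sequence $M_n$ in Lemma \ref{lem: power-sub estimate} with $k=k_i$, and using $|\alpha'|+\alpha_i=|\alpha|$, produces
$$\bigl|\partial^\alpha F_{i-1}(y)\bigr|\le A(2B)^{|\alpha|}M_{|\alpha|}\prod_{j\ge i}|y_j|^{-a_j\alpha_j},$$
which closes the induction with the one additional factor $|y_i|^{-a_i\alpha_i}$.

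The step I expect to be the main obstacle is not any individual calculation but the bookkeeping of uniformity through the iteration: Lemma \ref{lem: power-sub estimate} must be invoked with a sequence $\widetilde M_n$ that itself depends on $|\alpha'|$ and on the frozen coordinates $y_j$ with $j\ne i$, and one has to verify that the purely pointwise, parameter-free form of the lemma lets the constants at each stage be taken independent of those frozen coordinates, so that after $d$ iterations one still has global constants. Once this is done, the chain-rule identity above is routine, and the only other numerical ingredient is that the base $B$ of the exponential factor is multiplied by $2$ at each stage, which is harmless for fixed $d$. I note that the quasianalyticity of $C^M$ plays no active role in the argument itself: it is present in the hypothesis of Theorem \ref{thm: thm3} only to make the one-dimensional Theorem \ref{thm: thm1} meaningful, while the derivative bounds here follow from the chain rule and Lemma \ref{lem: power-sub estimate} alone.
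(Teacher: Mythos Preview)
Your proposal is correct and is exactly the ``induction over the dimension'' the paper has in mind: you introduce the interpolating functions $F_i$, freeze all but one coordinate, and apply the one-variable estimate (Lemma~\ref{lem: power-sub estimate}, which is the content of Theorem~\ref{thm: thm1}) at each step, tracking that the new singular factor $|y_i|^{-a_i\alpha_i}$ appears only in the coordinate just processed. The bookkeeping you flag---uniformity of $\widetilde M_n$ in the frozen coordinates and the harmless doubling of $B$ at each of the $d$ steps---is handled correctly, and your remark that quasianalyticity is not used in the derivative estimate is also accurate.
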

Theorem \ref{thm: thm3} is a special case of the following claim: for any  $1\leq \ell\leq d$ and  multiindex $\alpha=\left(\alpha_1,\cdots,\alpha_d\right)$, we have 
\[\left|\frac{\partial^{|\alpha|}}{\partial x^\alpha}g(x_1,\cdots,x_\ell,x_{\ell+1}^{k_{\ell+1}},\cdots,x_d^{k_d})\right|\leq A B^{|\alpha|}\frac{M_{\alpha_1}\cdots M_{\alpha_d}}{|x_1|^{a_1\cdot\alpha_1}|x_2|^{a_2\cdot\alpha_2}\cdots|x_\ell|^{a_\ell\cdot\alpha_\ell}},\]
where $a=(\tfrac{k_1-1}{k_1},\cdots,\tfrac{k_d-1}{k_d})$.
This claim is an immediate consequence of Theorem \ref{thm: thm1}, by induction on $\ell$ and $d$.

The next result is the multivariate version of Theorem \ref{thm:thm2}, and it follows form it  by restricting functions from $C^M_a(\Omega)$ to lines. 
\begin{theorem}\label{thm:thm4}
	Let $M$ be a positive sequence,  $a\in [0,1)^d$, and and  $\Omega\subset \mathbb{R}^d$ be  star-shaped such that $\Omega\cap \{x_1\cdots
	x_d\neq 0\}$ is dense in $\Omega$. If $M$ is log--convex or $(M_n/n!)_{n\geq0}$ is non-decreasing, then the class $C^M_a(\Omega)$ is quasianalytic if and only if 
\eqref{eq:dc} holds.

\end{theorem}

\paragraph*{Acknowledgement} This work was started during the visit of the authors to IMPAN, Warsaw, in the framework of workshop ``Contemporary quasianalyticity problems''. The authors thank Misha Sodin for encouragement and useful remarks regrading the presentation of this paper. We are grateful to Gerhard Schindl for spotting several lapses in the published paper, which are corrected in the current arXiv version.

\bibliographystyle{plain}
\bibliography{bibPSiCC}
\today

\end{document}